\newcommand{\cP}{\mathcal{P}}
\newcommand{\cM}{\mathcal{M}}
\newcommand{\cR}{\mathcal{R}}
\newcommand{\cN}{\mathcal{N}}
\newcommand{\cA}{\mathcal{A}}
\newcommand{\cC}{\mathcal{C}}
\newcommand{\cG}{\mathcal{G}}
\newcommand{\cB}{\mathcal{B}}
\theoremstyle{definition}  \newtheorem{Def}{Definition}
\theoremstyle{plain}  \newtheorem{Thm}{Theorem} \theoremstyle{plain}
 \theoremstyle{remark} \newtheorem{Rek}{Remark}
 \theoremstyle{plain}\newtheorem{Lem}{Lemma}
 \theoremstyle{plain}
\theoremstyle{remark}
\begin{document}

\title{\textbf{Higher Dimensional Homology Algebra II:Projectivity
}}
\author{Fang Huang, Shao-Han Chen, Wei Chen, Zhu-Jun Zheng\thanks{Supported in part by NSFC with grant Number
10971071
 and Provincial Foundation of Innovative
Scholars of Henan.} }
\date{}
 \maketitle

\begin{center}
\begin{minipage}{5in}
{\bf  Abstract}:
In this paper, we will prove that the 2-category (2-SGp) of
symmetric 2-groups and 2-category ($\cR$-2-Mod) of $\cR$-2-modules(\cite{5})
have enough projective objects, respectively.
\\
{\bf{Keywords}:} Symmetric 2-Groups; Projective Objects;
$\cR$-2-Modules
\\
\end{minipage}
\end{center}
\maketitle \hspace{1cm}

\section{Introduction}
The 2-category ($\cR$-2-Mod) of $\cR$-2-modules should be important like the category (R-Mod) in classical homology
algebra (we call it 1-dimensional homology algebra).
The property of projective enough of
the category
(R-Mod) is a stone for constructing derived functor and derived category\cite{3,7,10}. We believe that the property of projective enough of 2-category ($\cR$-2-Mod) play the same role in higher dimensional homology algebra as the category (R-Mod) in 1-dimensional homology algebra.

In \cite{1}, D. Bourn and E.M. Vitale gave the definition of
projective objects in the 2-category (2-SGp) of symmetric
categorical groups(we call them symmetric 2-groups) and said that
"another problem concerns projective objects (in the sense of
Definition 11.1) in the 2-category of symmetric categorical groups.
The notion of projectivity is crucial in the classical theory, but,
unfortunately, we do not know if the 2-category of symmetric
categorical groups has enough projective objects. (It would be
interesting to solve this problem in order to appreciate the strong
specialization done in Sections 14 and 15, where we consider only
$\mathcal{F}$-extensions.)"

The main aim of this paper is try to prove the conjecture of D. Bourn and
E. M. Vitale.  In fact, we prove that the 2-categories (2-SGp) and
($\cR$-2-Mod) have enough projective objects from the well-known
result that the abelian category (R-Mod) has enough projective
objects.

The present paper is organized as follows.
In section 2, we will recall some basic facts on symmetric 2-groups
and their extensions, which are appeared in \cite{1,2,11,6}, and
give the definition of projective objects in
($\cR$-2-Mod)(\cite{5}). In the next two sections, we will proof
(2-SGp) and ($\cR$-2-Mod) have enough projective objects.

This is the second paper of the series works on higher dimensional homology algebra. The first paper is "2-Modules and the Representation of
2-Rings\cite{4}". In the coming papers, we shall give the definition of injective object in the 2-category ($\cR$-2-Mod), prove that this 2-category has enough injective objects and develop the (co)homology theory of it.

\section{Preliminary}
In this section, we will give the basic definitions and results
cited from \cite{1,2,11,6}.

\begin{Def}\cite{1,11,6}
For a sequence $(\Gamma,\varphi,\Sigma)$ in (2-SGp) as in the
following diagram:
\begin{center}
\scalebox{0.9}[0.85]{\includegraphics{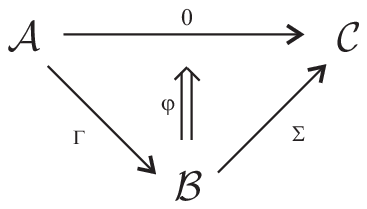}}
\end{center}
By the universal properties of kernel and cokernel(\cite{2,11,6}),
there are homomorphisms $\Gamma_0,\Sigma_0$ as in the following
diagram:
\begin{center}
\scalebox{0.9}[0.85]{\includegraphics{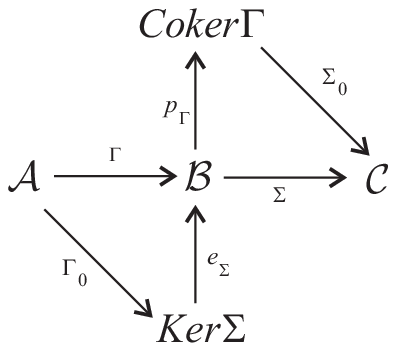}}
\end{center}
The sequence $(\Gamma,\varphi,\Sigma)$ is 2-exact if it satisfies
one of the following equivalent conditions:

1) $\Gamma_0:\cA\rightarrow Ker\Sigma$ is full and essentially
surjective;

2) $\Sigma_0:Coker\Gamma\rightarrow \cC$ is full and faithful.
\end{Def}

\begin{Rek} There are four equivalent conditions in above
definition from Proposition 6.2 in \cite{6}.
\end{Rek}
\begin{Def}\cite{1}
Let $\cA,\cC$ be in (2-SGp). An extension of $\cA$ by $\cC$ is a
diagram $(\Gamma,\varphi,\Sigma)$ in (2-SGp)
\begin{center}
\scalebox{0.9}[0.85]{\includegraphics{p1.eps}}
\end{center}
which satisfies the following equivalent conditions:

1) The triple $(\Gamma,\varphi,\Sigma)$ is 2-exact, $\Gamma$ is
faithful and $\Sigma$ is essentially surjective;

2) $\Gamma_0$ is an equivalence and $\Sigma$ is essentially
surjective;

3) $\Gamma$ is faithful and $\Sigma_0$ is an equivalence.
\end{Def}
Next, we will only consider one special case in the definition of
projective objects in (2-SGp) given by D. Bourn and E.M. Vitale in
\cite{1}.
\begin{Def}\cite{1}
Let $\cP$ be a symmetric 2-groups. $\cP$ is called projective if,
for each 1-morphism $G:\cP\rightarrow \cB$, and each essentially
surjective functor $F:\cA\rightarrow\cB$ in (2-SGp), there exist
$G^{'}:\cP\rightarrow\cA$, and $g:F\circ G^{'}\Rightarrow G$ in
(2-SGp).
\end{Def}
Similar as the methods in (2-SGp), we have
\begin{Def}
An object $\cP$ in ($\cR$-2-Mod)(\cite{5}) is called a projective
object, if for any $\cR$-homomorphism $G:\cP\rightarrow\cC$, and any
essentially surjective $\cR$-homomorphism $F:\cB\rightarrow\cC$,
there exist an $\cR$-homomorphism $G^{'}:\cP\rightarrow\cB$, and
2-morphism $h:F\circ G^{'}\Rightarrow G$ in ($\cR$-2-Mod).
\end{Def}

\section{Main Results I}

In this section, we will show that (2-SGp) has enough projective
objects from the basic results of 1-dimensional homological
algebraic theory.

\noindent\textbf{Notation}\cite{1,2,6}. For an abelian group
$G$, we write $G_{dis}$ for the symmetric 2-group with objects which are
the elements of $G$, morphism of $a\rightarrow b$ is only the
identity when $a=b$, the monoidal structure is induced from the
group structure of $G$. Moreover, for  a symmetric 2-group
$\cG$, we write $\pi_{0}(\cG)$ for the abelian group with the
elements which are objects of $\cG$ up to isomorphism(denote by $[b]$, for
$b\in obj(\cG)$), equipped with monoidal structure $+$ of $\cB$ as
the operation and with the unit object $0$ as the unit element.

\begin{Lem}
Given a surjective group homomorphism $f:B\rightarrow C$ of abelian
groups $B$ and $C$. There is an essentially surjective morphism
$F:B_{dis}\rightarrow C_{dis}$ of symmetric 2-groups.
\end{Lem}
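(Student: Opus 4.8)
The plan is to construct the functor $F \colon B_{dis} \to C_{dis}$ directly from the group homomorphism $f$ and then verify that it is essentially surjective in the sense required for $2$-groups. Recall that $B_{dis}$ has the elements of $B$ as objects, with only identity morphisms; so a monoidal functor out of $B_{dis}$ is determined by where it sends objects, together with coherence data that is forced to be trivial here. First I would define $F$ on objects by $F(b) = f(b)$ for every $b \in B$, and on morphisms by sending the identity $\mathrm{id}_b$ to the identity $\mathrm{id}_{f(b)}$. Since the only morphisms in $B_{dis}$ are identities, this assignment is automatically functorial.

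Next I would check that $F$ is a morphism of symmetric $2$-groups, i.e.\ a symmetric monoidal functor. The monoidal structure on $B_{dis}$ is induced by the addition in $B$, and likewise for $C_{dis}$. Because $f$ is a group homomorphism we have $f(b_1 + b_2) = f(b_1) + f(b_2)$, so $F(b_1 + b_2) = F(b_1) + F(b_2)$ holds on the nose; the structural isomorphism $F(b_1)+F(b_2) \Rightarrow F(b_1+b_2)$ can therefore be taken to be the identity. Similarly $f(0)=0$ gives $F(0)=0$, so the unit comparison is the identity as well. All the coherence diagrams (associativity, unit, symmetry) then commute trivially, since every morphism involved is an identity in a discrete category. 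Thus $F$ is a genuine $1$-morphism in (2-SGp).

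Finally I would verify essential surjectivity. This requires that every object $c \in C_{dis}$ be isomorphic to some $F(b)$. Given $c \in C$, surjectivity of $f$ yields a $b \in B$ with $f(b) = c$, whence $F(b) = c$; the identity morphism $\mathrm{id}_c \colon F(b) \to c$ exhibits $c$ as (equal to, hence isomorphic to) an object in the image. Therefore $F$ is essentially surjective.

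I do not expect any genuine obstacle here: the discreteness of $B_{dis}$ and $C_{dis}$ collapses all the $2$-categorical coherence conditions to statements about identity morphisms, so the content reduces entirely to the elementary facts that $f$ is a homomorphism and is surjective. The only point demanding a little care is to state explicitly that in the discrete setting ``essentially surjective'' coincides with ``surjective on objects up to the identity isomorphism,'' so that the surjectivity of $f$ transfers to essential surjectivity of $F$ without further argument.
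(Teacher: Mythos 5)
Your proof is correct and follows essentially the same route as the paper: define $F$ on objects by $f$, send identities to identities, note that strict additivity makes the monoidal coherence trivial, and deduce essential surjectivity directly from surjectivity of $f$. Your treatment of the coherence data is slightly more explicit than the paper's, but the argument is the same.
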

\begin{proof}

There is a functor
\begin{align*}
&F:B_{dis}\longrightarrow C_{dis}\\
&\hspace{1.1cm}b\mapsto f(b),\\
&\hspace{0.3cm}b\xrightarrow[]{id}b\mapsto
f(b)\xrightarrow[]{id}f(b)
\end{align*}
Also,
$F(b_{1}+b_{2})=f(b_{1}+b_{2})=f(b_{1})+f(b_{2})=F(b_{1})+F(b_{2}).$
Then $F$ is homomorphism of symmetric 2-groups.

Then for any $c\in obj(C_{dis})$, i.e $c\in C$. From the surjective
group homomorphism $f$, there exists $b\in B$, such that $f(b)=c$.
Then there exists an object $b$ in $B_{dis}$, and identity morphism
$F(b)=c$. So, $F$ is essentially surjective.
\end{proof}
\begin{Lem}
Given an essentially surjective homomorphism $F:\cB\rightarrow\cC$
of symmetric 2-groups. There is a surjective group homomorphism
$F_{0}:\pi_{0}(\cB)\rightarrow\pi_{0}(\cC)$.
\end{Lem}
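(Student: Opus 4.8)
The plan is to construct $F_0$ directly on isomorphism classes and then verify it is a well-defined group homomorphism and surjective, mirroring the structure of the previous lemma but now working with the functor $\pi_0$ instead of $\mathrm{dis}$. Recall that $\pi_0(\cB)$ has as elements the isomorphism classes $[b]$ of objects $b$ of $\cB$, with addition induced by the monoidal structure and unit $[0]$. The natural definition is to set $F_0([b]) := [F(b)]$, using the object-assignment of the homomorphism $F$ on a representative $b$.

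The first step is to check that $F_0$ is well-defined, i.e.\ independent of the chosen representative. If $[b] = [b']$ in $\pi_0(\cB)$, then there is an isomorphism $b \to b'$ in $\cB$; since $F$ is a functor it carries this to an isomorphism $F(b) \to F(b')$ in $\cC$, so $[F(b)] = [F(b')]$ and the assignment descends to isomorphism classes. The second step is to verify $F_0$ is a group homomorphism. Because $F$ is a homomorphism of symmetric 2-groups, it comes equipped with a natural isomorphism $F(b_1 + b_2) \cong F(b_1) + F(b_2)$; passing to classes gives $F_0([b_1] + [b_2]) = F_0([b_1]) + F_0([b_2])$, and the monoidal unit is preserved up to isomorphism, so $F_0([0]) = [0]$. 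These are routine diagram-level checks once one records that the monoidal structure on $\pi_0$ is exactly the one inherited from the symmetric monoidal structure on the 2-group.

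The third and genuinely substantive step is surjectivity, and this is where the hypothesis that $F$ is essentially surjective is used. Given any class $[c] \in \pi_0(\cC)$ with $c \in \mathrm{obj}(\cC)$, essential surjectivity of $F$ produces an object $b \in \mathrm{obj}(\cB)$ together with an isomorphism $F(b) \cong c$ in $\cC$. Hence $[F(b)] = [c]$ in $\pi_0(\cC)$, which says precisely $F_0([b]) = [c]$, so $F_0$ is surjective.

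I expect the main (though still modest) obstacle to be the careful bookkeeping of the coherence isomorphisms: a homomorphism of symmetric 2-groups does not strictly preserve the sum, so every identity above ($F_0$ well-defined, additive, unit-preserving) must be read as holding after passage to isomorphism classes, and one should be explicit that $\pi_0$ is a functor sending isomorphic objects to equal classes so that these coherence $2$-isomorphisms collapse correctly. Once that functoriality of $\pi_0$ is in place the argument is essentially formal; the only place the structure is really exploited, rather than merely transported, is the surjectivity step, where essential surjectivity of $F$ is exactly what is needed.
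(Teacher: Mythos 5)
Your proposal is correct and follows essentially the same route as the paper's own proof: define $F_0([b]) = [F(b)]$, check well-definedness via functoriality of $F$, get additivity from the coherence isomorphism $F(b_1+b_2)\cong F(b_1)+F(b_2)$ of the monoidal functor, and deduce surjectivity directly from essential surjectivity. The only (harmless) addition is your explicit check that the unit is preserved, which is automatic for an additive map of groups.
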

\begin{proof}

There is a group homomorphism
\begin{align*}
&F_{0}:\pi_{0}(\cB)\longrightarrow \pi_{0}(\cC)\\
&\hspace{1.2cm}[b]\mapsto [F(b)]
\end{align*}
which is well-defined, since if $b$ and $b^{'}$ are in same
equivalent class, i.e. there is an isomorphism $\alpha:b\rightarrow
b^{'}$ in $\cB$, and for $F$ is a functor, so there is an
isomorphism $F(\alpha):F(b)\rightarrow F(b^{'})$, then $F(b)$ and
$F(b^{'})$ are the same element in $\pi_{0}(\cC)$. Moreover, for any
$[b_1],[b_2]\in \pi_0(\cB)$,
$$
F_{0}([b_1]+[b_2])=F_{0}([b_1+b_2])=[F(b_{1}+b_{2})],
$$
$$
F_{0}([b_1])+F_{0}([b_2])=[F(b_1)]+[F(b_2)]=[F(b_{1})+F(b_{2})],
$$
there is an isomorphism $F_{+}(b_1,b_2):F(b_1+b_2)\rightarrow
F(b_1)+F(b_2)$, such that $[F(b_1+b_2)]=[F(b_1)+F(b_2)]$ in
$\pi_{0}(\cC)$. Then $F_0$ is group homomorphism.

Then, for any $[c]\in \pi_{0}(\cC)$, choose a representative element
$c\in obj(\cC)$ of $[c]$. For $c\in obj(\cC)$, and essentially
surjective morphism $F$, there exist $b\in obj{\cB}$ and an
isomorphism $g:F(b)\rightarrow c$ in $\cC$. Then, for $[c]\in
\pi_{0}(\cC)$, there exists $[b]\in \pi_{0}(\cB)$, such that
$F_{0}([b])=[F(b)]=[c]$, i.e. $F_0$ is surjective.
\end{proof}

\begin{Lem}
Given a projective object $P$ in (Ab), where (Ab) is the category of
abelian groups(\cite{10,12}). Then $P_{dis}$ is a projective object
in (2-SGp).
\end{Lem}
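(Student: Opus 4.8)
The plan is to transport the problem across the functor $\pi_{0}$ and solve the resulting lifting problem in (Ab), where projectivity of $P$ is available. Assume we are given a $1$-morphism $G:P_{dis}\rightarrow\cB$ together with an essentially surjective homomorphism $F:\cA\rightarrow\cB$ in (2-SGp); we must produce $G':P_{dis}\rightarrow\cA$ and a $2$-morphism $g:F\circ G'\Rightarrow G$. First I would note that $\pi_{0}(P_{dis})=P$, because the only morphisms of $P_{dis}$ are identities, so distinct objects fall in distinct isomorphism classes. Applying $\pi_{0}$ then gives a group homomorphism $G_{0}=\pi_{0}(G):P\rightarrow\pi_{0}(\cB)$, and, since $F$ is essentially surjective, the previous lemma provides a surjective group homomorphism $F_{0}=\pi_{0}(F):\pi_{0}(\cA)\rightarrow\pi_{0}(\cB)$.

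Now I would use the hypothesis that $P$ is projective in (Ab): against the surjection $F_{0}$ and the homomorphism $G_{0}$, there is a group homomorphism $h:P\rightarrow\pi_{0}(\cA)$ with $F_{0}\circ h=G_{0}$. The remaining work is to realize $h$ by a genuine homomorphism of symmetric $2$-groups $G':P_{dis}\rightarrow\cA$ with $\pi_{0}(G')=h$, and then to compare $F\circ G'$ with $G$. For the construction of $G'$ I would invoke that a projective abelian group is free (every projective $\ZZ$-module is free, as $\ZZ$ is a principal ideal domain). Fixing a basis $\{e_{i}\}$ of $P$, I would choose for each $i$ an object $a_{i}$ of $\cA$ with $[a_{i}]=h(e_{i})$, define $G'$ on $\sum n_{i}e_{i}$ to be the corresponding monoidal product of the $a_{i}$ in $\cA$ (and the identity on morphisms), and take the structural isomorphisms $G'_{+}$ to be the canonical associativity, unit and symmetry isomorphisms of $\cA$. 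Because $P$ is free there are no relations among the $e_{i}$ to impose, so the coherence axioms for $G'$ collapse to the coherence of the symmetric monoidal structure of $\cA$, and $\pi_{0}(G')=h$ by construction.

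Finally I would build the $2$-morphism $g:F\circ G'\Rightarrow G$. By construction $\pi_{0}(F\circ G')=F_{0}\circ h=G_{0}=\pi_{0}(G)$, so for every object $p$ the objects $(F\circ G')(p)$ and $G(p)$ represent the same class of $\pi_{0}(\cB)$ and are therefore isomorphic; I would choose isomorphisms $g_{p}:(F\circ G')(p)\rightarrow G(p)$. Naturality is automatic since $P_{dis}$ carries only identity morphisms, so the only issue is compatibility of the family $\{g_{p}\}$ with the monoidal structures. Measuring the failure of each monoidal square to commute produces, by the symmetry constraint, a symmetric $2$-cocycle on $P$ with values in the group $\pi_{1}(\cB)$ of automorphisms of the unit object; its class is the obstruction to choosing the $g_{p}$ monoidally. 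Symmetric $2$-cocycles classify abelian extensions, so this class lies in $\mathrm{Ext}^{1}_{\ZZ}(P,\pi_{1}(\cB))$, which vanishes because $P$ is projective; adjusting the $g_{p}$ by the resulting coboundary yields a monoidal $g$, and $P_{dis}$ is projective in (2-SGp).

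The step I expect to be the main obstacle is exactly the passage from the abelian-group data to coherent $2$-group data, that is, equipping $G'$ with coherent structural isomorphisms and promoting the pointwise $\{g_{p}\}$ to a monoidal natural transformation. This is where projectivity of $P$ is used a second time, not merely for the lift $h$ in (Ab): through freeness it removes any relations to be checked for $G'$, and through the vanishing of $\mathrm{Ext}^{1}_{\ZZ}(P,\pi_{1}(\cB))$ it kills the cocycle obstruction for $g$. Everything else in the argument is formal bookkeeping.
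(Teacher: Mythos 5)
Your instinct that the real difficulty is promoting the abelian-group lift $h$ to a symmetric monoidal functor $G'$ is exactly right, but the way you dispose of that difficulty does not work, and the failure is fatal: the compatibility of $G'$ with the symmetries is an axiom that coherence never supplies, even when $P$ is free. Since every morphism of $P_{dis}$ is an identity, the symmetry axiom for $G'$ at a pair $(x,x)$ says that the square formed by $c_{G'(x),G'(x)}$, the two structure isomorphisms $G'_{+}(x,x)$, and $G'(c_{x,x})=\mathrm{id}$ commutes; whichever direction one orients $G'_{+}$, this forces $c_{G'(x),G'(x)}=\mathrm{id}$ in $\cA$ for every $x\in P$. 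Mac Lane coherence cannot give this: $c_{a,a}$ is a loop on $a+a$ whose underlying permutation is a transposition, and the assignment $[a]\mapsto c_{a,a}$ is a genuine invariant of $\cA$ (a quadratic map $\pi_{0}(\cA)\rightarrow\pi_{1}(\cA)$, the $k$-invariant), which is generally nonzero. Choosing a basis of $P$ removes relations among distinct generators, but this obstruction already lives on a single object, so "freeness kills all coherence conditions" is false. Your second use of projectivity, the vanishing of $\mathrm{Ext}^{1}_{\ZZ}(P,\pi_{1}(\cB))$ to fix up $g$, is fine as far as it goes, but it sits downstream of the real problem: $G'$ itself need not exist.

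In fact the statement being proved is false, so no repair is possible. Take $P=\ZZ$, let $\cA$ be the symmetric 2-group of $\ZZ$-graded real lines with the Koszul symmetry $c_{L,M}=(-1)^{\deg L\cdot\deg M}(\textup{swap})$, let $\cB=\ZZ_{dis}$, let $F:\cA\rightarrow\cB$ be the degree functor (essentially surjective), and let $G=\mathrm{id}_{\ZZ_{dis}}$. Any natural transformation $F\circ G'\Rightarrow G$, monoidal or not, forces $F(G'(1))=1$, so $G'(1)$ is a line of odd degree; the axiom above then demands $c_{G'(1),G'(1)}=\mathrm{id}$, whereas the Koszul sign gives $c_{G'(1),G'(1)}=-\mathrm{id}$. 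Hence no pair $(G',g)$ exists and $\ZZ_{dis}$ is not projective, although $\ZZ$ is free in (Ab). For what it is worth, the paper's own proof has the identical gap in sharper form: it defines $G'(x)$ as a chosen representative of the class $G_{0}'(x)$, asserts strict additivity of this choice (already false on objects), and hides monoidality of both $G'$ and $h$ behind "it is easy to check", which is precisely where the obstruction lives. Any correct treatment of projectives in (2-SGp) has to use objects with nontrivial $\pi_{1}$ (free symmetric 2-groups, which are sphere-like rather than discrete), not the $P_{dis}$ construction.
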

\begin{proof}
For each essentially surjective morphism $F:\cA\rightarrow\cB$ and
morphism $G:P_{dis}\rightarrow\cB$ in (2-SGp), from Lemma 2, there
are group homomorphisms $F_{0}:\pi_{0}(\cA)\rightarrow\pi_{0}(\cB)$
and $G_{0}:\pi_{0}(P_{dis})\rightarrow \pi_{0}(\cB)$, with $F_0$ is
a surjection  and $\pi_{0}(P_{dis})=P$.

For $P$ is projective object in (Ab), There exists
$G_{0}^{'}:P\rightarrow\pi_{0}(\cA)$, such that the following
diagram commutes:
\begin{center}
\scalebox{0.9}[0.85]{\includegraphics{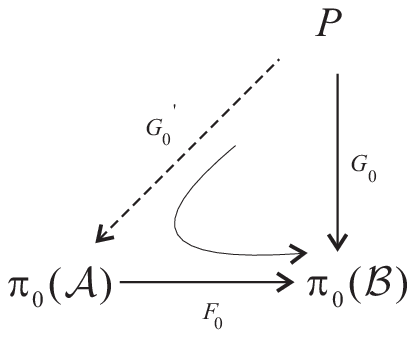}}
\end{center}
From group homomorphism $G_{0}^{'}:P\rightarrow\pi_{0}(\cA)$, define
a morphism
\begin{align*}
&G^{'}:P_{dis}\longrightarrow\cA\\
&\hspace{1.1cm}x\mapsto G^{'}\triangleq G_{0}^{'}(x),\\
&\hspace{0.2cm}x\xrightarrow[]{id}x\mapsto
G_{0}^{'}(x)\xrightarrow[]{id} G_{0}^{'}(x)
\end{align*}
where $G_{0}^{'}(x)$ is the representative element of the equivalent
class $G_{0}^{'}(x)$ in $\pi_{0}(\cA)$, and
$G^{'}(x_{1}+x_{2})=G_{0}^{'}(x_1+x_2)=G_{0}^{'}(x_1)+G_{0}^{'}(x_2)=G^{'}(x_1)+G^{'}(x_2)$,
for $x_1,x_2 \in obj(P_{dis})$.

Moreover, for $x\in obj(P_{dis})=P$, there is
$F_{0}(G_{0}^{'}(x))=G_{0}(x)$, and under the definitions of $F_0$
and $G_0$, we have $[F(G^{'}(x))]=[(F\circ G^{'})(x)]=[G(x)]$ in
$\pi_{0}(\cB)$, then there is an isomorphism $h_{x}:F\circ
G^{'})(x)\rightarrow G(x)$ in $\cB$. It is easy to check that there
is a 2-morphism $h:F\circ G^{'}\Rightarrow G$ in (2-SGp) by
$h_{x}$.

From above, we proved that $P_{dis}$ is a projective object in
(2-SGp).
\end{proof}

\begin{Lem}
Given a projective object $\cP$ in (2-SGp). Then $\pi_{0}(\cP)$ is a
projective object in (Ab).
\end{Lem}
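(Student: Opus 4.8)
The plan is to run the arguments of Lemmas 1--3 in reverse, transporting the lifting problem for $\pi_{0}(\cP)$ in (Ab) across the functors $(-)_{dis}$ and $\pi_{0}$. Two structural facts drive everything. First, for any abelian group $B$ one has $\pi_{0}(B_{dis})=B$ canonically, since the only morphisms in $B_{dis}$ are identities, so its isomorphism classes are exactly the elements of $B$; under this identification the homomorphism $F_{0}$ attached by Lemma 2 to the discretization $F=f_{dis}$ of a map $f\colon B\to C$ is just $f$ itself. Second, every symmetric 2-group admits a canonical projection homomorphism $\Pi\colon\cP\to\pi_{0}(\cP)_{dis}$ sending an object $b$ to its class $[b]$ and every morphism to an identity (legitimate because isomorphic objects have equal classes); one checks immediately that $\Pi$ respects the monoidal structure and that $\Pi_{0}\colon\pi_{0}(\cP)\to\pi_{0}(\pi_{0}(\cP)_{dis})=\pi_{0}(\cP)$ is the identity.

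To prove $\pi_{0}(\cP)$ is projective in (Ab), fix a surjective group homomorphism $f\colon B\to C$ and a homomorphism $g\colon\pi_{0}(\cP)\to C$; I must produce $g'\colon\pi_{0}(\cP)\to B$ with $f\circ g'=g$. By Lemma 1, $f$ induces an essentially surjective homomorphism $F\colon B_{dis}\to C_{dis}$ of symmetric 2-groups. I turn $g$ into 2-group data by setting $G=g_{dis}\circ\Pi\colon\cP\to C_{dis}$, where $g_{dis}\colon\pi_{0}(\cP)_{dis}\to C_{dis}$ is the discretization of $g$; this is a composite of homomorphisms, hence a homomorphism, and by the computation of the previous paragraph its induced map satisfies $G_{0}=(g_{dis})_{0}\circ\Pi_{0}=g$.

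Now I invoke the hypothesis that $\cP$ is projective in (2-SGp) with respect to the essentially surjective $F$ and the morphism $G$: there exist a homomorphism $G'\colon\cP\to B_{dis}$ and a 2-morphism $h\colon F\circ G'\Rightarrow G$. Applying $\pi_{0}$ descends this diagram to (Ab). Since a 2-morphism is a natural isomorphism, $F\circ G'$ and $G$ send each object to isomorphic objects, so they induce the same map on $\pi_{0}$; combined with functoriality of $\pi_{0}$ this yields $F_{0}\circ G'_{0}=G_{0}$, that is $f\circ G'_{0}=g$ after the identifications $\pi_{0}(B_{dis})=B$ and $F_{0}=f$. Thus $g'=G'_{0}\colon\pi_{0}(\cP)\to B$ is the required lift, and $\pi_{0}(\cP)$ is projective in (Ab).

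The routine verifications (that $\Pi$ and $g_{dis}$ are genuine homomorphisms of symmetric 2-groups, and that $\pi_{0}$ is functorial and collapses 2-morphisms) I would dispatch quickly. The step I expect to require the most care is the bookkeeping that makes $G_{0}=g$ and $F_{0}=f$ \emph{exactly}, not merely up to the canonical isomorphisms $\pi_{0}(B_{dis})\cong B$ and $\pi_{0}(C_{dis})\cong C$; keeping these identifications consistent through the composite $g_{dis}\circ\Pi$ and through the application of $\pi_{0}$ to $h$ is where an indexing slip would hide, so I would state the identification $\pi_{0}((-)_{dis})=\mathrm{id}$ as a clean preliminary and reuse it uniformly.
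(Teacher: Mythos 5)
Your proposal is correct and follows essentially the same route as the paper: discretize $f$ and $g$, compose with the canonical projection $\cP\to(\pi_{0}(\cP))_{dis}$ to get the lifting problem in (2-SGp), invoke projectivity of $\cP$, and descend the resulting $G'$ and 2-morphism $h$ back to (Ab). The only cosmetic difference is that you phrase the descent via functoriality of $\pi_{0}$ and the identification $\pi_{0}((-)_{dis})=\mathrm{id}$, whereas the paper defines $g'([x])=G'(x)$ element-wise and checks well-definedness directly; these are the same map and the same argument.
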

\begin{proof}
For each surjective morphism $f:A\rightarrow B$ and 1-morphism
$g:\pi_{0}(\cP)\rightarrow B$ in (Ab). From Lemma 1, we have an
essentially surjective morphism $F:A_{dis}\rightarrow B_{dis}$ and a
1-morphism $\tilde{G}:(\pi_{0}(\cP))_{dis}\rightarrow B_{dis}$, and
there is a composition $G:\cP\rightarrow
(\pi_{0}(\cP))_{dis}\rightarrow B_{dis}$. There exist a 1-morphism
$G^{'}:\cP\rightarrow A_{dis}$ and a 2-morphism $h:F\circ
G^{'}\Rightarrow G$ in the sense of $\cP$ is projective object in
(2-SGp).

Define a group homomorphism
\begin{align*}
&g^{'}: \pi_{0}(\cP)\longrightarrow A\\
&\hspace{1.2cm}[x]\mapsto g^{'}([x])\triangleq G^{'}(x)
\end{align*}
which is well-defined, since if $[x]=[x^{'}]$ in $\pi_{0}(\cP)$,
there is an isomorphism $\alpha:x\rightarrow x^{'}$ in $\cP$, and
$G^{'}$ is a fuctor, there is a morphism
$G^{'}(\alpha):G^{'}(x)\rightarrow G^{'}(x^{'})$ in $A_{dis}$, so
$G^{'}(x)$ must be equal to $G^{'}(x^{'})$ in $A$, i.e.
$g^{'}([x])=g^{'}([x^{'}])$.

Moreover, from 2-morphism $h:F\circ G^{'}\Rightarrow G$, we have a
morphism $h_{x}:F(G^{'}(x))\rightarrow G(x)$ in $\cB$. Thus, we have
$g^{'}\circ f=g$.
\end{proof}
The next lemma appeared in \cite{6} as a fact without proof, here we
will give its proof.
\begin{Lem}
For a symmetric 2-group $\cA$, there is a full and essentially
surjective 1-morphism $H:\cA\rightarrow (\pi_{0}(\cA))_{dis}$ in
(2-SGp).
\end{Lem}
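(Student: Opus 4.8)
The plan is to take the only natural candidate: the functor $H$ that sends each object to its isomorphism class and collapses every morphism to an identity. Explicitly, I would set
\begin{align*}
&H:\cA\longrightarrow (\pi_{0}(\cA))_{dis}\\
&\hspace{1.1cm}a\mapsto [a],\\
&\hspace{0.3cm}(f:a\rightarrow b)\mapsto \mathrm{id}_{[a]}.
\end{align*}
The point that makes this assignment sensible on morphisms is that $\cA$ is a groupoid, so any $f:a\rightarrow b$ is an isomorphism; hence $[a]=[b]$ in $\pi_{0}(\cA)$, and $\mathrm{id}_{[a]}$ is a legitimate (indeed the only) morphism $[a]\rightarrow [b]$ in $(\pi_{0}(\cA))_{dis}$.

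First I would check that $H$ is a functor: identities clearly map to identities, and composition is preserved because any composite is again sent to an identity, while identities compose to identities in the discrete target. Next I would equip $H$ with the structure of a homomorphism of symmetric 2-groups. On objects we have $H(a+b)=[a+b]=[a]+[b]=H(a)+H(b)$, the middle equality being exactly the definition of the group law on $\pi_{0}(\cA)$ induced by the monoidal structure of $\cA$; so the comparison morphisms $H_{+}(a,b):H(a+b)\rightarrow H(a)+H(b)$ can all be taken to be identities. Because every structure isomorphism in the discrete 2-group $(\pi_{0}(\cA))_{dis}$ is an identity, every coherence axiom (associativity, unit, and the symmetry compatibility) reduces to an identity, so $H$ is a genuine 1-morphism in (2-SGp).

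It remains to verify the two properties. Essential surjectivity is immediate, and in fact $H$ is surjective on objects: every object of $(\pi_{0}(\cA))_{dis}$ is a class $[a]$ with $a\in obj(\cA)$, and $H(a)=[a]$ on the nose. For fullness I would argue on the hom-maps $\cA(a,b)\rightarrow (\pi_{0}(\cA))_{dis}([a],[b])$. If $[a]\neq[b]$ the target is empty and there is nothing to prove; if $[a]=[b]$ the target consists of the single identity morphism, and I must produce a morphism of $\cA$ mapping to it. This is exactly where the definition of $\pi_{0}$ enters: $[a]=[b]$ means there is an isomorphism $\alpha:a\rightarrow b$ in $\cA$, and $H(\alpha)=\mathrm{id}_{[a]}$ hits the required morphism. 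Thus $H$ is full.

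I do not expect a serious obstacle: the entire content is the observation that $\cA$ is a groupoid (so morphisms witness equality of classes) together with the fact that a discrete 2-group has no nonidentity morphisms and no nontrivial coherence data. The only place demanding genuine care is fullness, where one must remember that a morphism $[a]\rightarrow[b]$ exists only when $[a]=[b]$, and then invoke the existence of an isomorphism $a\cong b$ in $\cA$ to realize it.
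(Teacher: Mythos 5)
Your proposal is correct and follows essentially the same route as the paper: you define the identical functor $H(a)=[a]$, $H(f)=\mathrm{id}_{[a]}$, and verify fullness via the defining property of $\pi_{0}(\cA)$ (that $[a]=[b]$ yields an isomorphism $a\cong b$ in $\cA$) and essential surjectivity by choosing representatives. The only difference is that you spell out the functoriality, monoidal-structure, and coherence checks that the paper dismisses as obvious, which is a harmless elaboration rather than a different argument.
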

\begin{proof}

There is a homomorphism of symmetric 2-groups
\begin{align*}
&\hspace{0.3cm}H:\cA\longrightarrow (\pi_{0}(\cA))_{dis}\\
&\hspace{1.1cm}a\mapsto [a],\\
&a_{1}\xrightarrow[]{\alpha}a_{2}\mapsto
[a_1]\xrightarrow[]{id}[a_{2}]
\end{align*}
obviously, $H$ is well-defined homomorphism of symmetric 2-groups.

$H$ is full: for any pair of objects $a_1,a_2$ in $\cA$, and
identity morphism $id:H(a_1)\rightarrow H(a_2)$ in
$(\pi_{0}(\cA))_{dis}$, i.e. $[a_1]=[a_2]$ in $\pi_{0}(\cA)$, and
from the definition of $\pi_{0}(\cA)$, there is an isomorphism
$\alpha:a_1\rightarrow a_2$ in $\cA$, such that $H(\alpha)=id$.

$H$ is essentially surjective: for any object $[a]$ in
$(\pi_{0}(\cA))_{dis}$, choose one representative object $a\in\cA$
of $[a]$, s.t. $H(a)=[a]$.
\end{proof}

Abelian category (Ab) has enough projective objects as the category
of $\mathds{Z}$-modules, i.e. for any abelian group $A$,
there is a surjective morphism $f:P\rightarrow A$, with $P$
projective\cite{10}.

\begin{Thm}
(2-SGp) has enough projective objects, i.e. for any symmetric
2-group in (2-SGp), there is an essentially surjective homomorphism
$F:\cP\rightarrow\cA$, with $\cP$ projective object in (2-SGp).
\end{Thm}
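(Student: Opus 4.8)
The plan is to reduce the statement for (2-SGp) to the known fact that (Ab) has enough projective objects, passing down to abelian groups via $\pi_{0}$ and coming back up via the functor $(-)_{dis}$ together with Lemma 5. Given an arbitrary symmetric 2-group $\cA$, I would first form the abelian group $\pi_{0}(\cA)$ and invoke the fact stated just before the theorem: choose a projective abelian group $P$ and a surjective group homomorphism $f:P\rightarrow\pi_{0}(\cA)$. By Lemma 3 the candidate $\cP:=P_{dis}$ is then a projective object in (2-SGp), so it remains only to produce an essentially surjective $1$-morphism $P_{dis}\rightarrow\cA$.

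To build this map I would exploit the two functors attached to $f$ and to $\cA$. By Lemma 1 the surjection $f$ yields an essentially surjective homomorphism $\tilde{F}:P_{dis}\rightarrow(\pi_{0}(\cA))_{dis}$, and by Lemma 5 there is a full, essentially surjective homomorphism $H:\cA\rightarrow(\pi_{0}(\cA))_{dis}$. The key observation is that $H$ can play the role of the essentially surjective cover in Definition 3: applying projectivity of $P_{dis}$ to the $1$-morphism $\tilde{F}:P_{dis}\rightarrow(\pi_{0}(\cA))_{dis}$ and to the essentially surjective $H:\cA\rightarrow(\pi_{0}(\cA))_{dis}$ produces a $1$-morphism $G':P_{dis}\rightarrow\cA$ together with a $2$-morphism $h:H\circ G'\Rightarrow\tilde{F}$. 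Setting $\cP=P_{dis}$ and $F=G'$ then gives the required data, provided $G'$ is essentially surjective.

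The step I expect to carry the real content is exactly this last verification, since projectivity only hands us a lift, not control over its image. Here I would use the $2$-morphism $h$: for each object $p\in P$ the component $h_{p}:(H\circ G')(p)\rightarrow\tilde{F}(p)$ is an isomorphism in the \emph{discrete} $2$-group $(\pi_{0}(\cA))_{dis}$, hence forced to be an identity, so that $[G'(p)]=f(p)$ in $\pi_{0}(\cA)$. Now given any object $a\in\cA$, surjectivity of $f$ furnishes some $p\in P$ with $f(p)=[a]$; then $[G'(p)]=f(p)=[a]$, which by the definition of $\pi_{0}(\cA)$ means there is an isomorphism $G'(p)\cong a$ in $\cA$. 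Thus every object of $\cA$ is hit up to isomorphism, $G'=F$ is essentially surjective, and $\cP=P_{dis}$ is the desired projective cover, completing the proof.
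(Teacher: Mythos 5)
Your proposal is correct and takes essentially the same route as the paper: reduce to (Ab) via $\pi_{0}$, take a projective cover $P\rightarrow\pi_{0}(\cA)$, set $\cP=P_{dis}$ (projective by Lemma 3), and apply projectivity of $P_{dis}$ against the full, essentially surjective $H:\cA\rightarrow(\pi_{0}(\cA))_{dis}$ of Lemma 5 and the map from Lemma 1, then verify the resulting lift is essentially surjective. The only cosmetic difference is in that last verification: you use discreteness of $(\pi_{0}(\cA))_{dis}$ to force the components of the $2$-morphism to be identities and then invoke the definition of $\pi_{0}$, whereas the paper composes with the $2$-morphism component and cites fullness of $H$ --- which is the same fact in a different guise.
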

\begin{proof}
For any symmetric 2-group $\cA$, we have an abelian group
$\pi_{0}(\cA)$. Thus, for $\pi_{0}(\cA)\in obj(Ab)$, there is a
surjective morphism $h:P\rightarrow \pi_{0}(\cA)$, with $P$
projective in (Ab). From Lemma 3, we know that $P_{dis}$ is a
projective object in (2-SGp), together with the full and essentially
surjective morphism $H:\cA\rightarrow(\pi_{0}(\cA))_{dis}$\cite{6},
and the 1-morphism $G:P_{dis}\rightarrow (\pi_{0}(\cA))_{dis}$ from
Lemma 1, there exist a 1-morphism $F:P_{dis}\rightarrow \cA$, and
2-morphism $h:H\circ F\Rightarrow G$ as in the following diagram
\begin{center}
\scalebox{0.9}[0.85]{\includegraphics{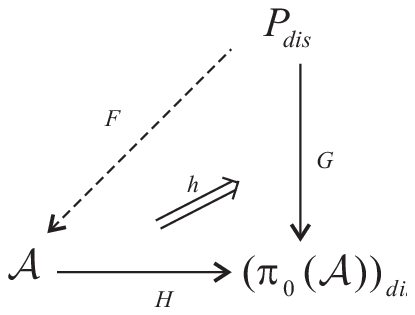}}
\end{center}
Next, we will show that $F:P_{dis}\rightarrow\cA$ is an essentially
surjective morphism  in (2-SGp).

In fact, for any $a\in obj(\cA)$, there is $H(a)\in
obj((\pi_{0}(\cA))_{dis})$, and since $G$ is an essentially
surjective morphism, there exist $x\in obj(P_{dis})$, and
isomorphism $\beta:G(x)\rightarrow H(a)$ in $(\pi_{0}(\cA))_{dis}$.
Using 2-morphism $h:H\circ F\Rightarrow G:P_{dis}\rightarrow
(\pi_{0}(\cA))_{dis}$, there is a morphism $h_{x}:H(F(x))\rightarrow
G(x)$, then we get a composition morphism $\beta\circ
h_{x}:H(F(x))\rightarrow H(a)$ in $(\pi_{0}(\cA))_{dis}$, and since
$H$ is full, there is a morphism $\alpha:F(x)\rightarrow a$ in
$\cA$, such that $H(\alpha)=\beta\circ h_{x}$.

Then for any $a\in obj(\cA)$, there exist $x\in obj({P_{dis}})$ and
an isomorphism $\alpha:F(x)\rightarrow a$ in $\cA$.

Denote $\cP\triangleq P_{dis}$, we have an essentially surjective
morphism $F:\cP\rightarrow\cA$, with $\cP$ projective object in
(2-SGp).
\end{proof}
\section{Main Results II}
\begin{Lem}
For a given 2-ring $\cR$(\cite{4}), $\pi_{0}(\cR)$ is a ring.
\end{Lem}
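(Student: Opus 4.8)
The plan is to equip $\pi_{0}(\cR)$ --- which already carries an additive abelian group structure coming from the underlying symmetric 2-group structure of $\cR$, exactly as in the Notation preceding Lemma 1 --- with a multiplication induced by the multiplicative monoidal structure $\otimes$ of the 2-ring $\cR$ (\cite{4}). Concretely, I would define $[a]\cdot[b]\triangleq[a\otimes b]$ for objects $a,b\in obj(\cR)$, and then verify each ring axiom by checking that the corresponding coherence isomorphism of $\cR$ collapses to an honest equality in $\pi_{0}(\cR)$.

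First I would show this multiplication is well-defined. If $[a]=[a^{'}]$ and $[b]=[b^{'}]$, there are isomorphisms $\alpha:a\rightarrow a^{'}$ and $\beta:b\rightarrow b^{'}$ in $\cR$; since $\otimes$ is a bifunctor, $\alpha\otimes\beta:a\otimes b\rightarrow a^{'}\otimes b^{'}$ is an isomorphism, whence $[a\otimes b]=[a^{'}\otimes b^{'}]$. This is the same mechanism --- isomorphisms becoming equalities in $\pi_{0}$ --- already used to prove $F_{0}$ well-defined in Lemma 2.

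Next I would verify associativity, unitality, and distributivity. The 2-ring $\cR$ supplies, as part of its structure, a natural associativity isomorphism $(a\otimes b)\otimes c\cong a\otimes(b\otimes c)$, left and right unit isomorphisms $1\otimes a\cong a\cong a\otimes 1$ for the unit object $1$, and distributor isomorphisms $a\otimes(b+c)\cong(a\otimes b)+(a\otimes c)$ and $(a+b)\otimes c\cong(a\otimes c)+(b\otimes c)$. Passing to isomorphism classes, each of these yields the corresponding identity in $\pi_{0}(\cR)$: associativity $([a]\cdot[b])\cdot[c]=[a]\cdot([b]\cdot[c])$, the multiplicative unit $[1]$ with $[1]\cdot[a]=[a]=[a]\cdot[1]$, and both distributive laws. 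Commutativity of addition and the remaining additive group axioms are inherited directly from $\pi_{0}(\cR)$ as a symmetric 2-group.

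The computations themselves are routine once the bifunctoriality of $\otimes$ is in hand; the only point requiring care --- the main, and rather mild, obstacle --- is to confirm that the coherence data packaged in the definition of a 2-ring in \cite{4} indeed provides all the structural isomorphisms listed above, so that no ring axiom is left unaccounted for. No higher coherence (pentagon, hexagon, and the like) survives the passage to $\pi_{0}$, since $\pi_{0}$ records objects only up to isomorphism; thus the verification reduces entirely to the \emph{existence}, not the coherence, of these isomorphisms.
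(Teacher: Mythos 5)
Your proposal is correct and follows essentially the same route as the paper: the multiplication on $\pi_{0}(\cR)$ is induced by $[r_1]\cdot[r_2]\triangleq[r_1\cdot r_2]$, and the ring axioms (associativity, unit, distributivity) are verified by observing that the structural isomorphisms of the 2-ring collapse to equalities of isomorphism classes. If anything, your write-up is slightly more careful than the paper's, which omits the explicit well-definedness check via bifunctoriality and records only one of the two distributive laws.
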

\begin{proof}From the symmetric 2-group $\cR$, we have an abelian
group $\pi_{0}(\cR)$ which is given as in Lemma 2, together with a
multiplication given by the multiplication of $\cR$, i.e. for
$[r_1],[r_2]$ in $\pi_{0}(\cR)$, $[r_1]\cdot[r_2]\triangleq
[r_1\cdot r_2]$ under the multiplicity of $\cR$. Also, the
multiplicity of $\pi_{0}(\cR)$ satisfies the following conditions,
for all possible elements of $\pi_{0}(\cR)$:

1. $([r_1]\cdot[r_2])\cdot[r_3]=[r_1\cdot r_2]\cdot[r_3]=[(r_1\cdot
rr_2)\cdot r_3]=[r_1(r_2\cdot r_3)]=[r_1]\cdot([r_2]\cdot[r_3])$;

2. There exists $1\in\pi_{0}(\cR)$, which is the unit object in
$\cR$, with $1\cdot[r]=[1\cdot r]=[r]=[r\cdot 1]=[r]\cdot 1$;

3.
$[r]\cdot([s_{0}]+[s_{1}])=[r]\cdot[s_{0}+s_{1}]=[r\cdot(s_{0}+s_{1})]=[r\cdot
s_{0}+r\cdot s_{1}]=[r\cdot s_{0}]+[r\cdot
s_{1}]=[r]\cdot[s_{0}]+[r]\cdot[s_{1}].$

So, $\pi_{0}(\cR)$ is a ring.
\end{proof}

\begin{Lem}
For a ring $R$, there is a 2-ring $R_{dis}$ associated with
$R$.
\end{Lem}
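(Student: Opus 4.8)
The plan is to mimic the passage already used for $\pi_{0}$ but in the reverse direction: just as Lemma~6 transports the multiplication of a 2-ring $\cR$ down to its group of components, here I would lift the multiplication of the ordinary ring $R$ up to the discrete symmetric 2-group $R_{dis}$. Recall from the Notation above that for the underlying additive abelian group $(R,+)$, the object $R_{dis}$ is the symmetric 2-group whose objects are the elements of $R$, whose only morphisms are identities, and whose monoidal (additive) structure is induced by $(R,+)$. So the underlying symmetric 2-group is already in hand; what remains is to supply the multiplicative structure of a 2-ring in the sense of \cite{4} and to verify its axioms.

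First I would define the multiplication as a functor
\begin{align*}
&\otimes: R_{dis}\times R_{dis}\longrightarrow R_{dis}\\
&\hspace{1.5cm}(r_1,r_2)\mapsto r_1\cdot r_2,
\end{align*}
given on objects by the ring multiplication of $R$ and sending the only morphisms (identities) to identities. Since every morphism in $R_{dis}\times R_{dis}$ is an identity, functoriality is immediate. I would then single out the unit object $1\in obj(R_{dis})$ coming from the multiplicative identity of $R$.

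Next I would produce the coherence data required of a 2-ring: the associativity isomorphism $a_{r_1,r_2,r_3}:(r_1\cdot r_2)\cdot r_3\Rightarrow r_1\cdot(r_2\cdot r_3)$, the left and right unit isomorphisms, and the left and right distributivity isomorphisms relating $r\cdot(s_0+s_1)$ with $r\cdot s_0+r\cdot s_1$ and similarly on the other side. The key observation is that associativity $(r_1 r_2)r_3=r_1(r_2 r_3)$, the unit laws $1\cdot r=r=r\cdot 1$, and the distributive laws all hold strictly in the ring $R$ (these are precisely the identities recorded in the proof of Lemma~6). Hence each structural morphism can be taken to be the identity morphism in $R_{dis}$.

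Finally, the coherence axioms (the pentagon- and hexagon-type diagrams and the compatibilities between the multiplication and the symmetric 2-group structure demanded by the definition in \cite{4}) must be checked. This is where one might anticipate difficulty, but discreteness trivializes it: every arrow in $R_{dis}$ is an identity, so each coherence diagram consists solely of identity morphisms and therefore commutes automatically. The only genuine obstacle is bookkeeping — one must match the construction against the complete list of data and axioms for a 2-ring given in \cite{4} and confirm that every required isomorphism is available (as an identity) and every required equation holds. Assembling these verifications exhibits $R_{dis}$ as a 2-ring associated with $R$.
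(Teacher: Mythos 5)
Your proposal is correct and follows essentially the same route as the paper, which only gives a brief sketch: take the discrete category on the elements of $R$, note it is a symmetric 2-group since $(R,+)$ is abelian, and induce the 2-ring structure from the multiplication of $R$. Your version is in fact more complete than the paper's, since you explicitly observe that all coherence isomorphisms can be taken to be identities and that every coherence diagram commutes automatically by discreteness.
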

Sketch of proof. $R_{dis}$ is a category consisting of:

$\cdot$ Objects are just the elements of $R$;

$\cdot$ Morphism from $r_{1}$ to $r_2$ is identity if $r_{1}=r_{2}$,
otherwise, empty.

$R_{dis}$ is a discrete symmetric 2-group for $R$ is an abelian
group.

$R_{dis}$ is a 2-ring from $R$ is ring. We can define the 2-ring
structure of $R_{dis}$ the structure of $R$.

\begin{Lem}
Given an $\cR$-2-module $\cM$, then $\pi_{0}(\cM)$ is an
$\pi_{0}(\cR)$-module. Conversely, for an $R$-module $M$, then
$M_{dis}$ is an $R_{dis}$-2-module.
\end{Lem}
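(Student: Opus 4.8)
The plan is to prove both directions of the statement by constructing the relevant module structures explicitly and verifying the axioms, following exactly the pattern already established for $\pi_0$ and the discrete construction in Lemmas 6, 7, and 8. The key observation is that the action map is the ``same'' map on objects, and passing to $\pi_0$ or to the discrete construction is functorial enough that the module axioms transport automatically.

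For the forward direction, suppose $\cM$ is an $\cR$-2-module. First I would recall that by Lemma 6 the abelian group $\pi_0(\cR)$ carries a ring structure, and that $\pi_0(\cM)$ is an abelian group by the $\pi_0$ construction of Section 3. The scalar action is then defined on representatives by
\begin{align*}
&\pi_0(\cR)\times\pi_0(\cM)\longrightarrow\pi_0(\cM)\\
&\hspace{1.6cm}([r],[m])\mapsto[r\cdot m],
\end{align*}
where $r\cdot m$ denotes the action of $\cR$ on $\cM$ at the level of objects. Next I would check this is well-defined: if $[r]=[r']$ and $[m]=[m']$ then there are isomorphisms $r\to r'$ in $\cR$ and $m\to m'$ in $\cM$, and since the action functor $\cR\times\cM\to\cM$ sends these to an isomorphism $r\cdot m\to r'\cdot m'$, we get $[r\cdot m]=[r'\cdot m']$ in $\pi_0(\cM)$. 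Then I would verify the module axioms (associativity $([r][s])[m]=[r]([s][m])$, unitality $[1][m]=[m]$, and the two distributivity laws) by the same pattern as the multiplicative axioms in Lemma 6: each identity holds in $\pi_0(\cM)$ because the corresponding coherence isomorphism exists in $\cM$, so the bracketed classes coincide.

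For the converse direction, given an $R$-module $M$, the object $M_{dis}$ is a discrete symmetric 2-group (its additive structure comes from $M$ as an abelian group, exactly as in the Notation of Section 3), and $R_{dis}$ is a 2-ring by Lemma 7. I would define the $R_{dis}$-action on $M_{dis}$ on objects by $r\cdot m$ (using the $R$-module structure of $M$) and on morphisms by sending identities to identities; since both categories are discrete this is automatically functorial, and all the 2-ring-module coherence data are identities, so the required diagrams commute strictly. The module axioms for $M_{dis}$ then reduce to the ordinary $R$-module axioms for $M$.

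I expect the main obstacle to be bookkeeping rather than genuine difficulty: the definition of an $\cR$-2-module in \cite{5} comes with coherence natural isomorphisms (an associator for the action, a unitor, and distributivity constraints), and to make the forward direction rigorous one must check that applying $\pi_0$ collapses each such coherence isomorphism to an equality of classes, so that the strict module axioms hold in $\pi_0(\cM)$. The hard part is therefore to confirm that every structural isomorphism of the 2-module descends correctly; once that is granted, both directions follow by the now-routine ``same map on objects, identities on morphisms'' construction used throughout Section 3 and Lemma 7.
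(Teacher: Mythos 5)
Your proposal matches the paper's own proof essentially step for step: the same action $([r],[m])\mapsto[r\cdot m]$ on $\pi_{0}$-classes with the module axioms verified by bracket manipulation (coherence isomorphisms in $\cM$ descending to equalities of classes), and the same discrete construction $(r,m)\mapsto r\cdot m$ with identity morphisms for the converse. Your version is in fact slightly more careful than the paper's, since you explicitly check well-definedness of the action and flag that the coherence data of \cite{5} must collapse under $\pi_{0}$, points the paper leaves implicit.
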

\begin{proof}
First, $\pi_{0}(\cM)$ is an abelian group, together with a binary
operator
\begin{align*}
&\cdot:\pi_{0}(\cR)\times\pi_{0}(\cM)\rightarrow \pi_{0}(\cM)\\
&([r],[m])\mapsto [r\cdot m],
\end{align*}
where $r\cdot m$ is the operation of $\cR$ on $\cM$.

Moreover, $(\pi_{0}(\cM),\cdot)$ satisfies:

1.
$[r]\cdot([m_1]+[m_2])=[r]\cdot[m_1+m_2]=[r\cdot(m_1+m_2)]=[r\cdot
m_1+r\cdot m_2]=[r\cdot m_1]+[r\cdot
m_2]=[r]\cdot[m_1]+[r]\cdot[m_2];$

2. $ ([r_1]+[r_2])\cdot[m]=[r_1+r_2]\cdot[m]=[(r_1+r_2)\cdot
m]=[r_1\cdot m+r_2\cdot m]=[r_1\cdot m]+[r_2\cdot
m]=[r_1]\cdot[m]+[r_2]\cdot [m]$;

3. $([r_1]\cdot[r_2])\cdot[m]=[r_1\cdot r_2]\cdot[m]=[(r_1\cdot
r_2)\cdot m]=[r_1\cdot(r_2\cdot m)]=[r_1]\cdot[r_2\cdot
m]=[r_1]\cdot([r_2]\cdot m)$;

4. $1\cdot[m]=[1\cdot m]=[m]$.

So, $\pi_{0}(\cM)$ is an $\pi_{0}(\cR)$-2-module.

Conversely, for an $R$-module $M$, there is a symmetric 2-group
$M_{dis}$. Moreover, there is a bifunctor $\cdot:R_{dis}\times
M_{dis}\rightarrow M_{dis}$ gave by $(r,m)\mapsto r\cdot m$ under
the operation of $R$ on $M$ and natural identities from the axioms
of $R$-module $M$. After basic calculations, $M_{dis}$ is an
$R_{dis}$-2-module.

\end{proof}
\begin{Lem}
Let $f:M\rightarrow N$ be a surjective $R$-homomorphism of
$R$-modules. Then there is an essentially surjective
$R_{dis}$-homomorphism $F:M_{dis}\rightarrow N_{dis}$.
\end{Lem}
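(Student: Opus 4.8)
The plan is to mirror the construction of Lemma 1, now carrying along the module structure supplied by Lemma 8. Since a surjective $R$-homomorphism $f$ is in particular a surjective homomorphism of the underlying abelian groups, Lemma 1 already produces an essentially surjective homomorphism of the underlying symmetric 2-groups; explicitly I would set
\begin{align*}
&F:M_{dis}\longrightarrow N_{dis}\\
&\hspace{1.1cm}m\mapsto f(m),\\
&\hspace{0.3cm}m\xrightarrow[]{id}m\mapsto f(m)\xrightarrow[]{id}f(m).
\end{align*}
This is well defined because $M_{dis}$ and $N_{dis}$ are discrete, so the only morphisms present are identities. It then remains to check that $F$ is compatible with the $R_{dis}$-action and that it is essentially surjective as a morphism of $R_{dis}$-2-modules.

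For the additive part, $F(m_1+m_2)=f(m_1+m_2)=f(m_1)+f(m_2)=F(m_1)+F(m_2)$ holds exactly as in Lemma 1, so the monoidal structure isomorphism $F_{+}$ can be taken to be the identity. For the action I would use that $f$ is an $R$-homomorphism: for $r\in R$ and $m\in M$ one has $F(r\cdot m)=f(r\cdot m)=r\cdot f(m)=r\cdot F(m)$, so the square relating the two action bifunctors $\cdot:R_{dis}\times M_{dis}\to M_{dis}$ and $\cdot:R_{dis}\times N_{dis}\to N_{dis}$ constructed in Lemma 8 commutes on the nose. Because both $M_{dis}$ and $N_{dis}$ are discrete, every coherence natural isomorphism appearing in the definition of an $R_{dis}$-2-module and of an $R_{dis}$-homomorphism (\cite{5}) is forced to be an identity, so these strict equalities are precisely what is needed for $F$ to be an $R_{dis}$-homomorphism.

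Finally, for essential surjectivity: given any object $n\in obj(N_{dis})=N$, surjectivity of $f$ yields $m\in M$ with $f(m)=n$, hence $F(m)=n$ and the identity furnishes an isomorphism $F(m)\to n$ in $N_{dis}$. I expect the only genuinely delicate point to be the bookkeeping of the coherence 2-cells, namely confirming that the structural isomorphism $F_{+}$ and the action-compatibility 2-cells demanded of an $R_{dis}$-homomorphism may all be taken to be identities; this is immediate from discreteness but should be stated explicitly to match the axioms in \cite{5}. Once this verification is recorded, the lemma follows at once.
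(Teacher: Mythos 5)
Your proposal is correct and follows essentially the same route as the paper: define $F$ on objects by $f$ and on (identity) morphisms by identities, use $R$-linearity of $f$ to get $F(r\cdot m)=r\cdot F(m)$, and deduce essential surjectivity directly from surjectivity of $f$. Your additional remark that discreteness forces all coherence 2-cells (including $F_{+}$) to be identities is a welcome bit of extra care, but it does not change the argument.
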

\begin{proof}
There ia a functor
\begin{align*}
&F:M_{dis}\rightarrow N_{dis}\\
&m\mapsto F(m)\triangleq f(m),\\
&m\xrightarrow[]{id}m\mapsto F(m)\xrightarrow[]{id}F(m)
\end{align*}
and $F(r\cdot m)\triangleq f(r\cdot m)=r\cdot f(m)=r\cdot F(m)$,
then $F$ is an $R_{dis}$-homomorphism.

For any $n\in N_{dis}=N$, since $f$ is surjective, there exists
$m\in M=obj(M_{dis})$, such that $f(m)=n$, i.e. $F(m)=n$. Then $F$
is an essentially surjective $R_{dis}$-homomorphism.
\end{proof}

\begin{Lem}
Let $F:\cM\rightarrow\cN$ be an essentially surjective
$\cR$-homomorphism of $\cR$-2-modules. Then there is a surjective
$\pi_{0}(\cR)$-homomorphism $f:\pi_{0}(\cM)\rightarrow\pi_{0}(\cN)$.
\end{Lem}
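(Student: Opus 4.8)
The plan is to transcribe the proof of Lemma 2 almost verbatim, adding a single new check: that the induced map respects the $\pi_{0}(\cR)$-action. Concretely, I would define
\[
f:\pi_{0}(\cM)\longrightarrow\pi_{0}(\cN),\qquad [m]\mapsto [F(m)],
\]
and then run through four verifications in order: well-definedness, additivity, $\pi_{0}(\cR)$-linearity, and surjectivity. Three of these are identical to arguments already completed in the excerpt, so the real content is concentrated in a single step.

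First I would establish well-definedness and additivity exactly as in Lemma 2. For well-definedness, if $[m]=[m']$ there is an isomorphism $\alpha:m\rightarrow m'$ in $\cM$; since $F$ is a functor, $F(\alpha):F(m)\rightarrow F(m')$ is an isomorphism in $\cN$, so $[F(m)]=[F(m')]$. For additivity, the monoidal structure isomorphism $F_{+}(m_1,m_2):F(m_1+m_2)\rightarrow F(m_1)+F(m_2)$ attached to the homomorphism $F$ collapses under $\pi_{0}$ to the equality $[F(m_1+m_2)]=[F(m_1)+F(m_2)]$ in $\pi_{0}(\cN)$, whence $f([m_1]+[m_2])=f([m_1])+f([m_2])$.

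The genuinely new step, which I expect to be the main point, is $\pi_{0}(\cR)$-linearity. Here I would use that $F$ is not merely a homomorphism of symmetric 2-groups but an $\cR$-homomorphism, so it carries a coherence 2-cell comparing the two ways of acting, namely a natural isomorphism $F(r\cdot m)\xrightarrow{\ \sim\ } r\cdot F(m)$ for every $r\in\Ob(\cR)$ and $m\in\Ob(\cM)$. Applying $\pi_{0}$ turns this isomorphism into an identity of classes, so that, with the $\pi_{0}(\cR)$-module structures on $\pi_{0}(\cM)$ and $\pi_{0}(\cN)$ furnished by Lemma 8,
\[
f([r]\cdot[m])=f([r\cdot m])=[F(r\cdot m)]=[r\cdot F(m)]=[r]\cdot[F(m)]=[r]\cdot f([m]).
\]
Thus $f$ is a $\pi_{0}(\cR)$-homomorphism. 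The only real obstacle is isolating the correct structural isomorphism of an $\cR$-homomorphism; once this coherence datum is named, the computation reduces to the abelian-group manipulation already performed in Lemma 2.

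Finally, surjectivity is copied from Lemma 2: given $[n]\in\pi_{0}(\cN)$, choose a representative $n\in\Ob(\cN)$; since $F$ is essentially surjective there exist $m\in\Ob(\cM)$ and an isomorphism $g:F(m)\rightarrow n$ in $\cN$, so $[F(m)]=[n]$ and hence $f([m])=[n]$. Combining the four steps, $f$ is the desired surjective $\pi_{0}(\cR)$-homomorphism.
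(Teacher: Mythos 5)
Your proposal is correct and takes essentially the same route as the paper: the identical map $[m]\mapsto [F(m)]$, with the same well-definedness and surjectivity arguments. In fact, your write-up is more complete than the paper's own proof, which merely asserts that $f$ is a $\pi_{0}(\cR)$-homomorphism and checks only well-definedness and surjectivity, whereas you explicitly verify additivity and $\pi_{0}(\cR)$-linearity via the coherence isomorphism $F(r\cdot m)\rightarrow r\cdot F(m)$ carried by an $\cR$-homomorphism.
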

\begin{proof}
There ia a $\pi_{0}(\cR)$-homomorphism
\begin{align*}
&f:\pi_{0}(\cM)\rightarrow\pi_{0}(\cN)\\
&\hspace{1cm}[m]\mapsto f([m])\triangleq [F(m)]
\end{align*}
which is well-defined, if $[m]=[m^{'}]$, i.e. there is an
isomorphism $\alpha:m\rightarrow m^{'}$, then there is an
isomorphism $F(\alpha):F(m)\rightarrow F(m^{'})$ in $\cN$, i.e.
$f(m)=[F(m)]=[F(m^{'})]=f(m^{'})$.

For any $[n]\in \pi_{0}(\cN)$, choose a representative element $n\in
obj(\cN)$ of $[n]$, and since $F$ is essentially surjective, there
exist $m\in obj(\cM)$, and $\beta:F(m)\rightarrow n$. Then there
exists $[m]\in \pi_{0}(\cM)$, such that $f([m])=[F(m)]=[n]$ in
$\pi_{0}(\cN)$, i.e. $f$ is surjective.
\end{proof}
\begin{Lem}
For a projective object $P$ in $(R$-Mod), there is a projective
object $P_{dis}$ in ($R_{dis}$-2-Mod).
\end{Lem}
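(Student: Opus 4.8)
The plan is to mirror the proof of Lemma 3, transporting the argument from (Ab) and (2-SGp) to ($R$-Mod) and ($R_{dis}$-2-Mod), with $\pi_{0}$ serving as the bridge between the two levels. First I would fix an arbitrary essentially surjective $R_{dis}$-homomorphism $F:\cB\rightarrow\cC$ and an arbitrary $R_{dis}$-homomorphism $G:P_{dis}\rightarrow\cC$; the goal, by the definition of projectivity in ($R_{dis}$-2-Mod), is to produce an $R_{dis}$-homomorphism $G^{'}:P_{dis}\rightarrow\cB$ together with a 2-morphism $h:F\circ G^{'}\Rightarrow G$.

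Applying $\pi_{0}$ and Lemma 10, the essentially surjective $F$ yields a surjective $\pi_{0}(R_{dis})$-homomorphism $F_{0}:\pi_{0}(\cB)\rightarrow\pi_{0}(\cC)$, while $G$ yields a homomorphism $G_{0}:\pi_{0}(P_{dis})\rightarrow\pi_{0}(\cC)$. Here I would use that $R_{dis}$ is discrete, so $\pi_{0}(R_{dis})=R$, and that by Lemma 8 the group $\pi_{0}(P_{dis})$ is a $\pi_{0}(R_{dis})$-module; since $P_{dis}$ is discrete this module is just $P$ with its original $R$-module structure. As $P$ is projective in ($R$-Mod) and $F_{0}$ is a surjection of $R$-modules, there exists an $R$-homomorphism $G_{0}^{'}:P\rightarrow\pi_{0}(\cB)$ with $F_{0}\circ G_{0}^{'}=G_{0}$.

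The next step is to promote $G_{0}^{'}$ back to the 2-level. For each object $x$ of $P_{dis}$, that is each $x\in P$, I would choose in $\cB$ a representative object of the class $G_{0}^{'}(x)\in\pi_{0}(\cB)$ and send the only morphisms of $P_{dis}$, the identities, to identities; this defines $G^{'}:P_{dis}\rightarrow\cB$ on the underlying symmetric 2-group exactly as in Lemma 3. The genuinely new content is to equip $G^{'}$ with the data making it an $R_{dis}$-homomorphism: since $G_{0}^{'}$ is additive and $R$-linear, for all $x_{1},x_{2}\in P$ and $r\in R$ one has $[G^{'}(x_{1}+x_{2})]=[G^{'}(x_{1})+G^{'}(x_{2})]$ and $[G^{'}(r\cdot x)]=[r\cdot G^{'}(x)]$ in $\pi_{0}(\cB)$, and these equalities of classes furnish the structural isomorphisms $G^{'}(x_{1}+x_{2})\cong G^{'}(x_{1})+G^{'}(x_{2})$ and $G^{'}(r\cdot x)\cong r\cdot G^{'}(x)$ required of a morphism of $\cR$-2-modules.

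Finally, from $F_{0}\circ G_{0}^{'}=G_{0}$ I get $[(F\circ G^{'})(x)]=[F(G^{'}(x))]=[G(x)]$ in $\pi_{0}(\cC)$ for every object $x$, hence an isomorphism $h_{x}:(F\circ G^{'})(x)\rightarrow G(x)$ in $\cC$; the family $\{h_{x}\}$ is the candidate 2-morphism $h:F\circ G^{'}\Rightarrow G$. I expect the main obstacle to be the verification that $\{h_{x}\}$ is genuinely a 2-morphism in ($R_{dis}$-2-Mod), namely that it is natural and compatible both with the symmetric monoidal structure and with the coherence data of the $R_{dis}$-action, together with the matching coherence checks for $G^{'}$ itself. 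Because $P_{dis}$ is discrete all naturality squares collapse to identities, so the entire burden is the monoidal and action coherence, precisely the step left as \emph{easy to check} in Lemma 3.
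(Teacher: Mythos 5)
Your proposal follows essentially the same route as the paper's own proof: apply $\pi_{0}$ to reduce to projectivity of $P$ in ($R$-Mod), lift the resulting $R$-homomorphism $g^{'}:P\rightarrow\pi_{0}(\cB)$ back to $G^{'}:P_{dis}\rightarrow\cB$ by choosing representative objects, and extract the 2-morphism $h$ from the equality of isomorphism classes $[F(G^{'}(x))]=[G(x)]$. If anything, you are more careful than the paper, which asserts the $R_{dis}$-homomorphism structure of $G^{'}$ and the 2-morphism property of $\{h_{x}\}$ without comment, whereas you explicitly flag the monoidal and action coherence verifications as the remaining burden.
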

\begin{proof}
For any essentially surjective $R_{dis}$-homomorphism
$F:\cM\rightarrow\cN$ and $R_{dis}$-homomorphism
$G:P_{dis}\rightarrow \cN$. We have a surjective $R$-homomorphism
$f:\pi_{0}(\cM)\rightarrow\pi_{0}(\cN)$ and an $R$-homomorphism
$g:P\rightarrow \pi_{0}(\cN)$, and $\pi_{0}(P_{dis})=P$ (\cite{6}).

Since $P$ is a projective object, there exists
$g^{'}:P\rightarrow\pi_{0}(\cM)$ such that $f\circ g^{'}=g$. Then we
get an $R_{dis}$-homomorphism
\begin{align*}
&G^{'}:P_{dis}\rightarrow \cM\\
&\hspace{1.2cm}x\mapsto G^{'}\triangleq g^{'}(x)\\
\end{align*}
where $g^{'}(x)$ is the representative element of the isomorphism
class of $g^{'}(x)$ in $\pi_{0}(\cM)$. And from $f(g^{'}(x))=g(x)$,
i.e. $[F(G^{'}(x))]=[G(x)]$, there exists an isomorphism
$h_{x}:F(G^{'}(x))\rightarrow G(x)$ in $\cN$, so defines a
2-morphism $h:F\circ G^{'}\Rightarrow G$.
\end{proof}
\begin{Thm}
($\cR$-2-Mod) has enough projective objects, i.e. for any $\cM\in
obj(\cR$-2-Mod), there exists an essentially surjective
$\cR$-homomorphism $F:\cP\rightarrow\cM$ with $\cP$ projective
object in ($\cR$-2-Mod).
\end{Thm}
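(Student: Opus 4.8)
The plan is to mirror the proof of Theorem 1, with the classical category (Ab) replaced by the module category ($\pi_0(\cR)$-Mod) and with every structure tracked through the functor $\pi_0(-)$ and the discretization $(-)_{dis}$. Given an arbitrary $\cR$-2-module $\cM$, I would first pass to $\pi_0(\cM)$, which by Lemma 8 is a $\pi_0(\cR)$-module, the ring structure on $\pi_0(\cR)$ being supplied by Lemma 6. Since the ordinary category ($\pi_0(\cR)$-Mod) has enough projectives, I choose a projective $\pi_0(\cR)$-module $P$ together with a surjective $\pi_0(\cR)$-homomorphism $h\colon P\to\pi_0(\cM)$. Setting $\cP:=P_{dis}$, Lemma 11 (applied with $R=\pi_0(\cR)$) shows that $\cP$ is projective as a $(\pi_0(\cR))_{dis}$-2-module, with $\pi_0(\cP)=P$.

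Next I would assemble the lifting diagram exactly as in Theorem 1. Applying Lemma 9 (with $R=\pi_0(\cR)$) to the surjection $h$ yields an essentially surjective $(\pi_0(\cR))_{dis}$-homomorphism $G\colon\cP\to(\pi_0(\cM))_{dis}$, which I regard as an $\cR$-homomorphism by restriction of scalars. In parallel I would establish the $\cR$-linear analogue of Lemma 5, namely that the canonical functor $H\colon\cM\to(\pi_0(\cM))_{dis}$, $a\mapsto[a]$, is full and essentially surjective; it is an $\cR$-homomorphism because the action on the discrete module $(\pi_0(\cM))_{dis}$ factors through $\pi_0(\cR)$ and $[r\cdot m]=[r]\cdot[m]$ by Lemma 8. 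Since $H$ is essentially surjective and $\cP$ is projective, applying projectivity to $H$ and the test morphism $G$ produces a $1$-morphism $F\colon\cP\to\cM$ together with a $2$-morphism $H\circ F\Rightarrow G$. Essential surjectivity of $F$ then follows verbatim from the Theorem 1 argument: given $a\in\mathrm{obj}(\cM)$, essential surjectivity of $G$ provides an object $x$ and an isomorphism $\beta\colon G(x)\to H(a)$; composing $\beta$ with the component $h_x\colon H(F(x))\to G(x)$ of the $2$-morphism and using that $H$ is full yields an isomorphism $F(x)\to a$ in $\cM$, so $F$ is essentially surjective.

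The genuine obstacle, absent in the $2$-group setting of Theorem 1, is the mismatch of scalars at the projectivity step: Lemma 11 only delivers projectivity of $\cP$ among $(\pi_0(\cR))_{dis}$-2-modules, whereas the theorem demands projectivity in the full $2$-category ($\cR$-2-Mod). I would bridge this using the canonical $2$-ring morphism $p\colon\cR\to(\pi_0(\cR))_{dis}$ and restriction of scalars along $p$, under which $\cP$, $G$ and $(\pi_0(\cM))_{dis}$ all become $\cR$-objects. The point that must be verified is that the representative-choosing construction of the lift used in the proofs of Lemma 3 and Lemma 11 still outputs an $\cR$-linear morphism: for $r\in\cR$ and $x\in P$ the restricted action gives $r\cdot x=[r]\cdot x$, and $\pi_0(\cR)$-linearity of the chosen classical lift $g'$ forces $G'([r]\cdot x)$ and $r\cdot G'(x)$ to lie in the same isomorphism class in the target, which supplies the required structural $2$-cells. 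Concretely this amounts to re-running the proof of Lemma 11 with the action restricted along $p$. Once this $\cR$-linearity bookkeeping is discharged, $\cP$ is projective in ($\cR$-2-Mod) and the remaining steps transcribe those of Theorem 1, giving the desired essentially surjective $\cR$-homomorphism $F\colon\cP\to\cM$ with $\cP$ projective.
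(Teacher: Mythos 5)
Your proposal follows essentially the same route as the paper's own proof: pass to $\pi_{0}(\cM)$, use that ($\pi_{0}(\cR)$-Mod) has enough projectives to get a surjection $P\to\pi_{0}(\cM)$, discretize via Lemma 9 to obtain $G:P_{dis}\to(\pi_{0}(\cM))_{dis}$, combine with the full and essentially surjective $H:\cM\to(\pi_{0}(\cM))_{dis}$ and the projectivity of $P_{dis}$ (Lemma 11) to produce $F:P_{dis}\to\cM$ with $H\circ F\Rightarrow G$, and then deduce essential surjectivity of $F$ from fullness of $H$. The one place you go beyond the paper is the scalar-mismatch issue ($P_{dis}$ being projective a priori only over $(\pi_{0}(\cR))_{dis}$ rather than over $\cR$), which the paper silently elides but you correctly flag and propose to resolve by restriction of scalars along $\cR\to(\pi_{0}(\cR))_{dis}$; this is a genuine refinement of, not a departure from, the paper's argument.
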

\begin{proof}
For $\cM$, $\pi_{0}(\cM)\in obj((\pi_{0}\cR)$-Mod), and
$((\pi_{0}\cR)$-Mod) has enough projective objects(\cite{10}), there
exists a surjective morphism $g:P\rightarrow \pi_{0}(\cM)$ with $P$
projective object in $(\pi_{0}\cR$-Mod). From Lemma 9, we have an
essentially surjective $G:P_{dis}\rightarrow (\pi_{0}(\cM))_{dis}$,
together with full and essentially surjective morphism
$H:\cM\rightarrow (\pi_{0}(\cM))_{dis}$(similar as Lemma 5), and
$P_{dis}$ is a projective object, there exist $F:P_{dis}\rightarrow
\cM$, and 2-morphism $h:H\circ F\Rightarrow G$.

Next, we will check that $F$ is  essentially surjective. For any
$m\in obj(\cM)$, $H(m)\in obj(\pi_{0}(\cM)_{dis})$, and $G$ is
essentially surjective, there exist $x\in obj(P_{dis})=P$ and
isomorphism $\alpha:G(x)\rightarrow H(m)$. From $h:H\circ
F\Rightarrow G$, $h_{x}:H(F(x))\rightarrow G(x)$, together with
$\alpha$, we have the composition morphism $H(F(x))\rightarrow
H(m)$. Moreover, $H$ is full, there exists a morphism
$F(x)\rightarrow m$ in $\cM$.
\end{proof}

\newpage

\noindent Fang Huang, Shao-Han Chen, Wei Chen\\
Department of Mathematics\\
 South China University of
 Technology\\
 Guangzhou 510641, P. R. China

\noindent Zhu-Jun Zheng\\
Department of Mathematics\\
 South China University of
 Technology\\
 Guangzhou 510641, P. R. China \\
 and\\
Institute of Mathematics\\
Henan University\\  Kaifeng 475001, P. R.
China\\
E-mail: zhengzj@scut.edu.cn

\end{document}